\documentclass[11pt]{amsart}
%\catcode`\_=13
%\def_#1{\sb{\sb{#1}}}
\usepackage{t1enc}
\usepackage{amsthm,amssymb}
\usepackage{latexsym}
\usepackage{amsmath} 
\usepackage{graphics}
\usepackage{epsfig,multicol}
\usepackage{amsfonts}
\usepackage[latin1]{inputenc}
\usepackage[english]{babel}
\usepackage{ mathrsfs }
\usepackage{hyperref}
\hypersetup{
    colorlinks=true,
    linkcolor=blue,
    filecolor=magenta,      
    urlcolor=cyan,
}

\paperwidth=200mm
\textwidth=140mm
\textheight=200mm
\topmargin=20mm 
\hoffset=-10mm
\textheight=200mm

\newtheorem{theorem}{Theorem}[subsection]
\newtheorem{proposition}{Proposition}[subsection]
\newtheorem{lemma}{Lemma}[subsection]
\newtheorem{definition}{Definition}[subsection]

\newtheorem{corollary}{Corollary}[subsection]
\newtheorem{remark}{Remark}[subsection]

\def\hpic #1 #2 {\mbox{$\begin{array}[c]{l} \epsfig{file=#1,height=#2}
\end{array}$}}
 
\def\vpic #1 #2 {\mbox{$\begin{array}[c]{l} \epsfig{file=#1,width=#2}
\end{array}$}}

\newcommand  {\rmn}\romannumeral

\newcommand{\g}{{\bf g}}

\begin{document}
\title{Irreducibility of the Wysiwyg representations  of Thompson's groups.}
\author{Vaughan F. R. Jones}
\thanks{}

\begin{abstract}
We prove irreducibility and mutual inequivalence for certain unitary representations of R. Thompson's groups F and T. 
%and investigate  the 
%von Neumann algebras generated by some subgroups.
\end{abstract}

\maketitle
\section{Introduction}
Let $F$ and $T$ be the Thompson groups as usual. In \cite{Jnogo} an action of $F$ was shown to arise from a \emph{functor} from the category $\mathcal F$  whose objects are natural numbers and whose morphisms are 
planar binary forests, to another category $\mathcal C$.
Forests decorated with cyclic permuations of their leaves give a category $\mathcal T$ for which functors from $\mathcal T$ give actions of $T$.

The representations studied in \cite{Jnogo} came from functors $\Phi$ to a trivalent tensor category (planar algebra) $\mathcal C$ in
the sense of \cite{MPS}, based on a specific "vacuum vector" $\Omega$ in the 1-box space of the tensor category.  A Thompson group 
element $g$ is represented by a pair of binary planar trees (see \cite{CFP}), drawn in the plane with one tree upside down on top of the other as below for an
element of $F$ that we will call D:

$\mbox{                       } \qquad D=  $\vpic {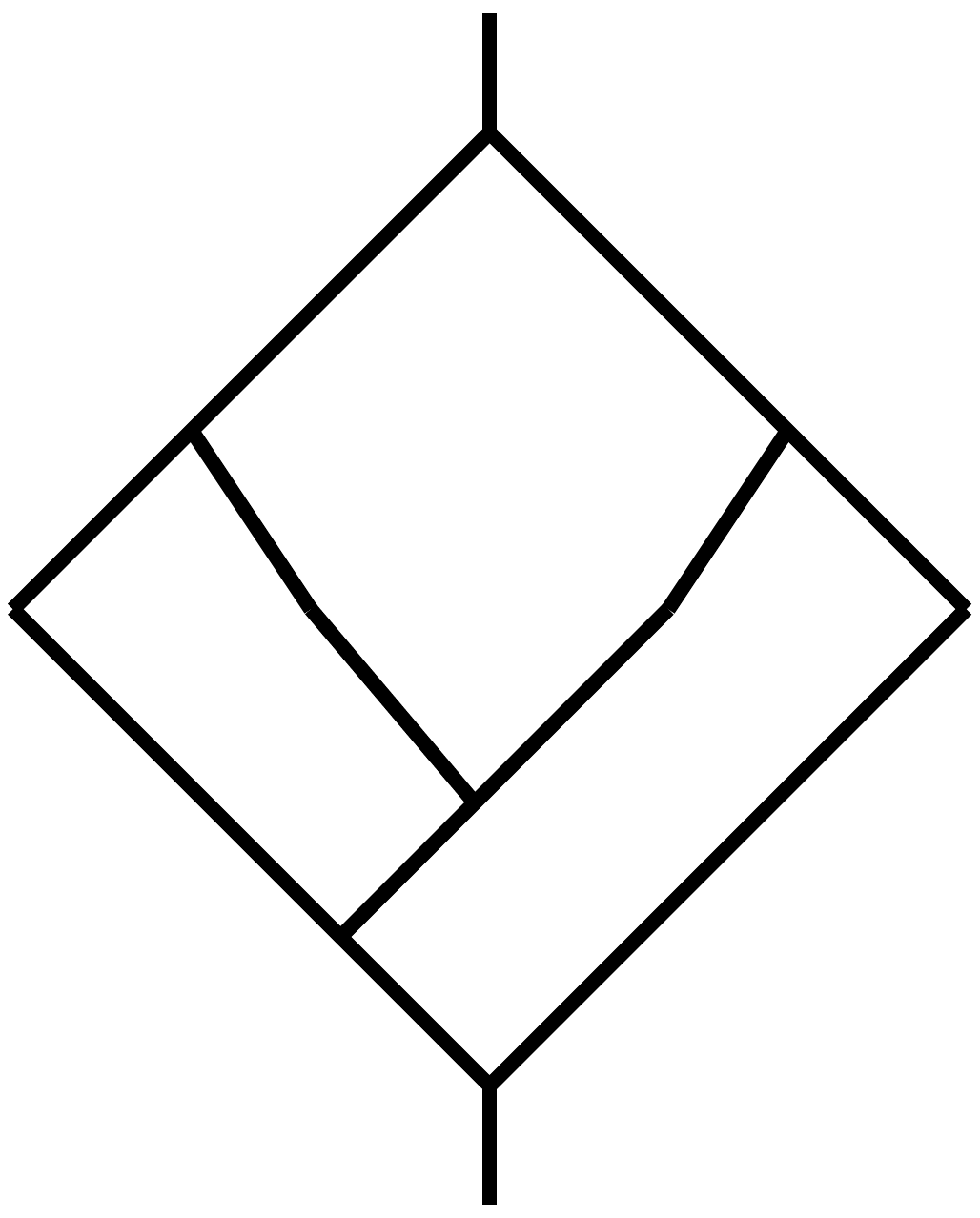} {1in}

The standard dyadic intervals defined by the leaves of the bottom tree are sent by $g$ (in the only affine way possible) to the corresponding intervals for the top tree. 

 If $\pi$ is the unitary representation defined by the (suitably normalised) trivalent vertex in $\mathcal C$, the coefficient 
 $$\langle \pi(g)\Omega, \Omega \rangle$$ is simply equal to the pair of trees of $g$ interpreted as a planar diagram (tangle) for
 $\mathcal C$ !! (Or more correctly the pair of trees as drawn is a multiple of a single vertical straight line and that multiple is
$\langle \pi(g)\Omega, \Omega \rangle$.)  For this reason we will call these representations, on the closure of the $F$-linear span of $\Omega$, the Wysiwyg (what you see is what you get) representations 
 of the Thompson groups. It is plausible that all the Wysiwyg representations are irreducible, indeed the entire family of 
 unitary representations defined in \cite{Jnogo} and \cite{Jthompson} could all be irreducible. Until this paper the only examples where irreducibility could
 be shown were when the representation was \emph{induced} from a subgroup. Then the problem becomes one of calculating
 the commensurator of that subgroup. This was done in \cite{gs} and \cite{NR}. 
 
 In this paper we will show that if we change the vacuum vector slightly then all the Wysiwyg representations are irreducible. The proof will not
 be difficult but comes from a remarkable piece of luck involving this new vacuum $\Psi$. To wit, $\Psi$ is fixed by the usual generator $A$
 of $F$ and any other fixed vector by $A$ is a multiple of $\Psi$. This immediately implies the representation is irreducible on the
 closed $F$-linear span of $\Psi$. (More information on the spectral measure of elements of $F$ can be found in \cite{AJ}.)
 %In another case we are able to find an eigenspace of multiplicity one for a vector $\Xi$ which shows irreducibility on the closed
 %$F$-linear span of $\Xi$. 
 
 Since these fixed vectors are canonical, any numerical data calculated from them are invariants of the representation.
 In this way we are able to show that these Wysiwyg representations are mutually inequivalent for different values of the parameter
 $d$ of the category $\mathcal C$. This was also unknown before, even for the induced representations.
 
 A more "elementary" way to get unitary representations is developed in \cite{BJ} where the tensor category is Hilbert spaces with
 direct sum as tensor product. Here too the coefficients of the vacuum vector are given simply by the pair of trees as above, but 
 interpreted as morphisms in the category. Little is known about irreducibility in this situation. 
 
% Finally we use our calculation technique to determine the von Neumann algebra generated in a Wysiwyg representation by 
 %certain non-cyclic subgroups. We obtain type II and type III factors.

\section{Definitions}

A binary planar forest is the isotopy class of a disjoint union of binary trees embedded
in $\mathbb R^2$ all of whose roots lie on $(\mathbb R,0)$ and all of whose leaves lie on $(\mathbb R,1)$. The isotopies are supported
in the strip $(\mathbb R,[0,1])$. Binary planar forests form a category
in the obvious way with objects being $\mathbb N$ whose elements are identified with isotopy classes of sets of points on a line and whose morphisms are the forests which can be composed by stacking a forest in 
$(\mathbb R,[0,1])$ on top of another, lining up the leaves of the one on the bottom with 
the roots of the other by isotopy then rescaling the $y$  axis to return
to a forest in  $(\mathbb R,[0,1])$. The structure is of course actually combinatorial but it is very useful to think of it in the way we have described.

We will call this category $\mathcal F$.

\begin{definition}\label{genforest} Fix $n\in \mathbb N$. For each $i=1,2,\cdots,n$ let $f_i$ be the planar
binary forest with $n$ roots and $n+1$ leaves consisting of straight
lines joining $(k,0)$ to $(k,1)$ for $1\leq k\leq i-1$ and $(k,0)$ to 
$(k+1,1)$ for $i+1\leq k\leq n$, and a single binary tree with root
$(i,0)$ and leaves $(i,1)$ and $(i+1,1)$ thus:

\hbox{       } \qquad\qquad \vpic {fi} {2in}

\end{definition}

Note that any element of $\mathcal F$ is in an essentially unique way a 
composition of morphisms $f_i$, the only relation being $\Phi(f_j)\Phi(f_i)=\Phi(f_i)\Phi(f_{j-1}) \mbox{   for  } i<j-1$.
The set of 
morphisms from $1$ to $n$ in $\mathcal F$ is the set  of binary planar rooted trees $\mathfrak T$ and is a \emph{directed set} with $s\leq t$ iff there is and $f\in \mathcal F$ with $t=fs$.

Given a functor $\Phi:\mathcal  F\rightarrow \mathcal C$ to a category $\mathcal C$ whose objects are sets, 
we define the direct system $S_\Phi$ which associates to each $t  \in \mathfrak T$, $t:1\rightarrow n$,
 the set $\Phi(target(t))=\Phi(n)$. 
 For
each $s\leq t$ we need to give $\iota_s^t$. For this observe 
that there is an $f\in \mathcal F$ for which $t=fs$ so we
define $$\iota_s^t(\kappa) =\Phi(f)$$
 which is an element of $Mor_{\mathcal C}(\Phi(target(s)),\Phi(target(t)))$ as required. The $\iota_s^t$ trivially
 satisfy the axioms of a direct system.
 
  As a slight variation on this theme, given a functor $\Phi:\mathcal  F\rightarrow \mathcal C$ to \underline{any} category $\mathcal C$, 
and an object $\omega \in \mathcal C$, form the category $\mathcal C^\omega$ whose objects are the sets $Mor_{\mathcal C}(\omega,obj)$ for every 
object $obj$ in $\mathcal C$, and
whose morphisms are composition with those of $\mathcal C$. The definition of the functor $\Phi^\omega:\mathcal F\rightarrow \mathcal C^\omega$
is obvious.
  Thus the direct system $S_{\Phi^\omega}$ associates to each $t  \in \mathfrak T$, $t:1\rightarrow n$,
 the set $Mor_{\mathcal C}(\omega,\Phi(n))$. Given $s\leq t$ let  $f\in \mathcal F$ be such that $t=fs$.Then
for $\kappa \in Mor_{\mathcal C}(\omega,\Phi(target(s)))$,
 $$\iota_s^t(\kappa) =\Phi(f)\circ \kappa$$
 which is an element of $Mor_{\mathcal C}(\omega,\Phi(target(t)))$. 
 
 As in \cite{Jnogo} we consider the direct limit:
$$ \underset{\rightarrow} \lim S_\Phi=\{(t, x) \mbox{ with } t \in  \mathfrak T, x\in \Phi(target(t))\} / \sim$$ 
where $(t,x)\sim (s,y)$ iff there are $r\in \mathfrak T, z\in \Phi(target(z))$ with $t=fr, s=gr$ and $\Phi(f)(x)=z=\Phi(g)(y)$.
\vskip 5pt
\emph{We use $\displaystyle {t \over x}$ to denote the equivalence class of $(t,x)$ mod $\sim$.}
\vskip 5pt
The limit $ \underset{\rightarrow} \lim S_\Phi$ will inherit structure from
the category $\mathcal C$. For instance if the objects of $\mathcal C$ are Hilbert spaces and the morphisms are isometries then 
$ \underset{\rightarrow} \lim S_\Phi$ will be a pre-Hilbert space which may be completed to a Hilbert space which we will also call the direct
limit unless special care is required.
\iffalse
Note that this is a slight modification of the definition of [] where
$S_\Phi(t)$ was $Mor(\Phi(e),\Phi(t))$. This was necessary in [] to
make $\Phi(t)$ a \emph{set} since we were dealing with abstract tensor
categories. Sometimes $Mor(\Phi(e),\Phi(t)) $  and $\Phi(n)$ can
be naturally identified in which case the definition is the same as in []. Such is the case for the identity functor from $\mathcal F$ to itself. 
\fi

As was observed in \cite{Jnogo}, if we let 
%has the required properties so that $ \underset{\rightarrow} \lim S_\Phi$ is, when 
$\Phi$ be the identity functor and choose $\omega$ to be the tree with one leaf,
then  the inductive limit consists of equivalence classes of pairs $\frac{t}{x}$ where $t\in \mathcal T$ and $x\in \Phi(target(t))=Mor(1,target(t))$.
But $Mor(1,target(t))$ is nothing but $s\in \mathcal T$ with $target(s)=target(t)$, i.e. trees with the same number of leaves as $t$.
Thus the inductive limit is nothing but the Thompson group $F$ with group law $${r\over s}{s\over t}={r\over t}.$$ 
%the Thompson group $F$, which is thus the \emph{group of fractions} of $\mathcal F$. The element $a\in Mor(e,target(b))$ is written $\displaystyle \frac{a}{b}$.

 Moreover for any other functor $\Phi$,
$ \underset{\rightarrow} \lim S_\Phi$ carries a natural action of $F$ 
defined as follows:
$$\frac{s}{t}(\frac{t}{x})=\frac{s}{x}$$ where $s,t\in \mathfrak T$ with $target(s)=target(t)=n$ and $x\in \Phi(n)$.  A Thompon group element given as a pair of trees with $m$ leaves, and an element of
 $ \underset{\rightarrow} \lim S_\Phi$ given as a pair (tree with $n$ leaves,element of $\Phi(n)$), may not be immediately composable by the above forumula, but they can always be ``stabilised'' to be so within their equivalence classes. 
 
 The Thompson group action preserves the structure of 
 $ \underset{\rightarrow} \lim S_\Phi$ so for instance in the Hilbert space case the representations are unitary.
\section{The Wysiwyg representations.}

We are especially interested in applying the construction of the previous section when $\mathcal C$ is the category
of bifinite bimodules (correspondences in the sense of Connes) over a von Neumann algebra $M$ - \cite{C20}. 
Here we begin with a   Hilbert space $\mathcal H$  which is a  binormal bimodule for $M$ and form the category whose
objects are the relative tensor powers $\otimes^n_M \mathcal H$ which are also $M-M$ bimodules. The morphisms
of the category are $M-M$ bimodule maps. We will for simplicity restrict to certain such bimodule categories which
have been described simply diagramatically in \cite{MPS}.  To be precise:

\begin{definition} \label{trivalentcat} For $d\in \{4cos^2\pi/n -1|n=4,5,6,\cdots\}\cup [3,\infty)$ let $\mathcal C$ be the trivalent (tensor) category described in
\cite{MPS} with objects $1$ and $\otimes^nX$ for $n\geq 0$, with $\otimes^0 X=1$, for a privileged object $X$. 

By definition $\mathcal C_n$ is $Mor(1,\otimes^n X)$ which is the finite dimensional
Hilbert space of linear combinations of tangles with $n$ boundary points and an arbitrary number of  trivalent vertices, modulo the skein relations
 given in \cite{MPS}, and the kernel of the natural positive semidefinite Hermitian form.
% which is naturally identifiable with $Mor(X, \otimes^{n-1} X)$.
We have $dim (\mathcal C_0)=1, dim( \mathcal C_1)=0, dim(\mathcal C_2=1)$ and $dim(\mathcal C_3)=1$.

\end{definition}

Here the object $1$ may be realised as $L^2(M)$ for a von Neumann algebra $M$ and $X$ would be some given $M-M$ correspondence.

The category $\mathcal C$ is by definition generated by an element in $\mathcal C_3$. Although \cite{MPS} does not address unitarity issues, 
 by realising $\mathcal C$ as
a cabled Temperley-Lieb  category as described  in \cite{MPS}, $\mathcal C$ has a $*$-structure with positive definite inner product.
The generator $Y$ of \cite{MPS} is then defined naturally from Temperley-Lieb and is invariant under the rotation. 
 $\mathcal C$ can also be realised as a category of correspondences over hyperfinite factors-\cite{HY}. (Indeed all of our considerations will apply to categories of $M-M$ bimodules  generated by $\mathcal H$ with suitable irreducibility and self-duality conditions, and an appropriate $M-M$ 
bilinear
$Y:\mathcal H\rightarrow \mathcal H\otimes_M\mathcal H$ with $Y^*Y=1$.)

\begin{definition} 
$Mor_{\mathcal C}(1, X\otimes X)$ is spanned by a single tangle consisting
 of a string joining the 2 boundary points. We will normalise this tangle by dividing 
it by $\sqrt d$ and call the resulting unit vector $\Psi$.

$Mor_{\mathcal C}(X,X \otimes X)$ is spanned by a single tangle consisting
 of the trivalent vertex. By our normalisation it is a unit vector.
We will call it  $\Omega$.
\end{definition}
The above choice of $Y$ gives a functor from the category of binary planar forests to $\mathcal C$ as described in \cite{Jnogo}.
The relation 2.1  of \cite{MPS} implies that the connecting maps $\iota_s^t$ are isometries. So
we obtain two categories $\mathcal C^1$ and $\mathcal C^X$ and direct systems 
$S_{\Phi^1}$ and $S_{\Phi^X}$. 
 Thompson's groups $F$ 
and $T$ act unitarily on the Hilbert spaces $\mathfrak H_Y^1$ and  $\mathfrak H_Y^X$ obtained as the (completions of the) direct limits 
$ \underset{\rightarrow} \lim S_{\Phi^1}$ and $ \underset{\rightarrow} \lim S_{\Phi^X}$ respectively.

%We will exploit a  unitary representation slightly different from the one in [] so we  briefly explain both, reserving $\Omega$ for the vacuum
%vector of [] and $\Psi$ for the vacuum vector of the representations which we will prove to be irreducible.

Thus $\Phi^X$ is the functor which sends the object $n$ to the finite dimensional Hilbert space
 $\mathcal C_{n+1}=Mor_{\mathcal C}(X,\otimes^n X)$, and a morphism (forest)
$f:m\rightarrow n \in \mathcal F$ to the isometry $\Phi^X(f):\mathcal C_{m+1}\rightarrow \mathcal C_{n+1}$ obtained by
interpreting the forest $f$ as an element of $\mathcal C_{m+n}$ by replacing the vertices in $f$ with the element $Y\in \mathcal C$.
The element $\Phi^X(f)$ then has $m$ ingoing boundary points and $n$ outgoing ones so defines an isometry according to
the usual language of planar algebras -\cite{jo2}. We illustrate  below, for $x\in \Phi^X(3)$ and $f$ the forest with 3 roots and 8 leaves which is
visible in the diagram (thus $m=3, n=8$):

$\Phi^X(f)(x) \in Mor_{\mathcal C}(X,\otimes^n X)=  $ \vpic {functor} {1.5in}

The functor $\Phi^1$ is in some sense even simpler. We have

$$ \Phi^1(n)=
\begin{cases}
                                                  0 & \mbox { if } n=1 \\     
                                                 \mathcal C_n & \mbox{ if } n\geq 2
                                                \end{cases} $$
                                                
And as an illustration , for $x\in \Phi^1(3)$ and $f$ as above,

$\Phi^1(f)(x) \in Mor_{\mathcal C}(1,\otimes^n X)=  $ \vpic {functor1} {1.5in}

When representing an element of Thompson's group $F$ as a pair of trees as in the introduction one can choose to either
include or not include the vertical edges at the top and bottom marking the roots. If $g\in F$ let $T_g$ be the version with
these vertices and $\mathbb T_g$ be the version without, e.g. for the element $D\in F$ of the introduction, the given
picture is $T_X$ and $\mathbb T_D$ is \vpic {X1} {1in}

\begin{proposition} Let $g\in F$ be given. Then the ``vacuum expectation value'', or ``vacuum coefficient'' 
$\langle \pi(g)\Omega,\Omega\rangle$ is ${1\over d}$ times the scalar obtained by tying the top of $T_g$ to the bottom and evaluating in 
$\mathcal C$ by replacing all vertices of the trees by $Y$'s. 

And $\langle \pi(g)\Psi,\Psi\rangle$ is ${1\over d}$ times the scalar obtained by replacing all the vertices of $\mathbb T_g$ by $Y$ and
evaluating in $\mathcal C$.
\end{proposition}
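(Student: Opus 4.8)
The plan is to transport both vacuum vectors through the direct-limit construction, collapse each matrix coefficient $\langle\pi(g)\cdot,\cdot\rangle$ to a single inner product inside one of the finite-dimensional spaces $\mathcal C_N$, and then read that inner product off as the evaluation of a closed tangle in $\mathcal C$. First one locates the vacua: since $\Omega\in Mor_{\mathcal C}(X,X\otimes X)=\Phi^X(2)$ and $\Psi\in\mathcal C_2=\Phi^1(2)$, and since the canonical map $\Phi^{\bullet}(2)\to\underset{\rightarrow}{\lim}\,S_{\Phi^{\bullet}}$ sends $x$ to $\frac{t_1}{x}$, where $t_1$ is the tree with two leaves, the vacuum vectors of $\mathfrak H_Y^X$ and $\mathfrak H_Y^1$ are $\frac{t_1}{\Omega}$ and $\frac{t_1}{\Psi}$. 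Present the given $g$ as a pair of trees $\frac{s}{t}$ with $target(s)=target(t)=m$; we may take $m\geq 2$, since both sides of the two asserted identities depend only on $g$. Then the root of each of $s$ and $t$ is a trivalent vertex, so $t_1\leq t$ and $t_1\leq s$, and there are forests $f,g'\in\mathcal F$ with $t=f\circ t_1$ and $s=g'\circ t_1$ --- concretely, $f$ (resp.\ $g'$) is $t$ (resp.\ $s$) with its root vertex removed.

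The one step that carries the content is that stabilising a vacuum along $\iota_{t_1}^t$ simply redraws the tree as a morphism of $\mathcal C$. From $\iota_{t_1}^t\!\left(\frac{t_1}{\kappa}\right)=\frac{t}{\Phi(f)\circ\kappa}$, from $\Phi(t_1)=Y=\Omega$, and from functoriality of $\Phi$ one gets $\iota_{t_1}^t\!\left(\frac{t_1}{\Omega}\right)=\frac{t}{\Phi(t)}$, where $\Phi(t)\in Mor_{\mathcal C}(X,\otimes^m X)$ is nothing but the tree $t$ with every vertex replaced by $Y$; likewise $\iota_{t_1}^s\!\left(\frac{t_1}{\Omega}\right)=\frac{s}{\Phi(s)}$. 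Writing $\Psi=\frac{1}{\sqrt d}\cup$, where $\cup$ denotes the cup spanning $\mathcal C_2$, the same step gives $\iota_{t_1}^t\!\left(\frac{t_1}{\Psi}\right)=\frac{t}{\frac{1}{\sqrt d}\,\Phi(f)\circ\cup}$ and $\iota_{t_1}^s\!\left(\frac{t_1}{\Psi}\right)=\frac{s}{\frac{1}{\sqrt d}\,\Phi(g')\circ\cup}$; here $\Phi(f)\circ\cup$ is the diagram obtained from $t$ by deleting its root vertex and joining the two exposed strands by a cup, all remaining vertices being $Y$.

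Now apply the action $\frac{s}{t}\!\left(\frac{t}{x}\right)=\frac{s}{x}$ and pair against the vacuum stabilised to the same tree $s$; this is legitimate because the connecting maps are isometries, so the direct-limit inner product of $\frac{s}{x}$ and $\frac{s}{y}$ equals $\langle x,y\rangle$ in $\Phi^{\bullet}(m)$. In the $\Omega$ case this yields $\langle\pi(g)\Omega,\Omega\rangle=\langle\Phi(t),\Phi(s)\rangle_{\mathcal C_{m+1}}$. In the $C^\ast$-tensor category $\mathcal C$ this inner product is characterised by $\langle\Phi(t),\Phi(s)\rangle\,\mathrm{id}_X=\Phi(s)^{\ast}\circ\Phi(t)$, and the morphism $\Phi(s)^{\ast}\circ\Phi(t):X\to X$, drawn out, is precisely $T_g$ --- with its two root edges read as the source and target $X$-strands --- after replacing all vertices by $Y$. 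Taking the categorical trace, i.e.\ tying the target strand to the source strand around the outside, multiplies a scalar multiple of $\mathrm{id}_X$ by $\mathrm{tr}(\mathrm{id}_X)$; and $\mathrm{tr}(\mathrm{id}_X)=d$ because closing up the cup is this same trace while $\Psi=\frac{1}{\sqrt d}\cup$ has norm one. Hence the scalar obtained by tying the top of $T_g$ to its bottom is $d\cdot\langle\Phi(t),\Phi(s)\rangle$, which is the first assertion. In the $\Psi$ case the identical manipulation gives $\langle\pi(g)\Psi,\Psi\rangle=\frac{1}{d}\,\langle\Phi(f)\circ\cup,\ \Phi(g')\circ\cup\rangle_{\mathcal C_m}$; since the source object is now $1$, the relation $\langle a,b\rangle\,\mathrm{id}_1=b^{\ast}\circ a$ makes the inner product itself a scalar, namely the value of the closed $(0,0)$-tangle $\cap\circ\Phi(g')^{\ast}\circ\Phi(f)\circ\cup$, and this tangle is exactly $\mathbb T_g$ with all vertices replaced by $Y$, the cup and cap being what the missing root edges and root vertices of $t$ and $s$ degenerate into. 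That is the second assertion.

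Everything above is diagram bookkeeping, so the point I would handle with the most care is the prefactor $\frac{1}{d}$, which has two different origins: in the first statement it is $1/\mathrm{tr}(\mathrm{id}_X)$, arising from closing up an $X$-strand, whereas in the second it is $(1/\sqrt d)^2$, arising from the normalisation of $\Psi$; one must check that these are consistent with the conventions that make the $\iota_s^t$ isometric (relation~2.1 of \cite{MPS}) and that fix the inner products on the $\mathcal C_{\bullet}$. It is also worth recording --- although it is immediate from the defining relation of the direct limit --- that the resulting scalars do not depend on the choices of $f$, $g'$, the stabilising tree, or the pair-of-trees representative of $g$.
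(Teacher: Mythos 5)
Your argument is correct and is essentially the paper's own proof: the paper also stabilises the vacuum to $\frac{t}{\Phi(t)}$ and $\frac{s}{\Phi(s)}$, applies the action $\frac{s}{t}\bigl(\frac{t}{x}\bigr)=\frac{s}{x}$, and reads off the inner product at a finite level as the closed tangle, merely presenting this as an ``exercise'' worked out on the single element $D$ for $\Phi^1$. Your version just carries out the same computation in general and is more explicit about where the two occurrences of the factor $\frac{1}{d}$ come from.
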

\begin{proof} This is an exercise which will be clear if we illustrate using the element $D\in F$ and the representation
coming from $\Phi^1$. 
\vskip 5pt
If $s$ is the tree \vpic {ess} {0.8in} and $t$ is the tree \vpic {tee} {0.8in} then $D={s\over t}$.  
As a vector in the direct limit, $\displaystyle \sqrt d \Psi={t\over \Phi^1(t)}$ so that $\displaystyle \sqrt d\pi(D)(\Psi)={s\over \Phi^1(t)}$.
But also $\sqrt d\displaystyle \Psi={s\over \Phi^1(s)}$ so by the definition of the inner product in the direct limit Hilbert space,
$d\langle \pi(g)\Psi,\Psi\rangle$ is the inner product in $\Phi^1(4)$ of $\Phi^1(t)$ with $\Phi^1(s)$ which by the 
definition of $\Phi$ is just \vpic {X1} {0.5in} viewed as an element of $\mathcal C_0=\mathbb C$.

The case of $\Phi^X$ is proved in the same way.

\end{proof}

\begin{definition} We call the representations $\pi_Y$ of $F$ coming from $\Phi^1$ and $\Phi^X$ as above, on the closed $F$-linear
span of  $\Psi$ and $\Omega$ respectively, the \emph{ Wysiwyg } representations of $F$.
\end{definition}
\begin{definition} \label{proj}
More generally one can let $p$ be any minimal projection in one of the algebras $\mathcal C_{2n}$ and consider the  direct system  $$t \mapsto \begin{cases} \mathcal C_m & \mbox{if $t$ is a tree with } m 
\mbox{ leaves and  } m\geq 2n\\
0 & \mbox{ if $t$ has less than $2m$ leaves. } 
\end{cases}$$Then $F$ acts on the direct limit as before and we let $\pi_{Y,p}$ be the unitary representation of $F$ on the 
Hilbert space completion, so that $\pi_{Y,\phi}$ contains $\pi_Y$ where $\phi$ is the identity of $\mathcal C_0$.
\end{definition}

\section{The main theorem}
Let $\mathcal C$  and $Y$ be as in the last section. Let $\pi_Y$ be the Wysiwyg representation of $F$ defined above with vacuum $\Psi$.

\begin{theorem} The unitary representation $\pi_Y$ is irreducible.\end{theorem}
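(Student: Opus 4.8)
The plan is to reduce the irreducibility of $\pi = \pi_Y$ to two facts about the standard generator $A$ of $F$ (the element of $PL_2([0,1])$ which is affine on each of $[0,\tfrac12]$, $[\tfrac12,\tfrac34]$, $[\tfrac34,1]$; as a pair of trees $A = \tfrac st$ where $s$ and $t$ are the two binary trees with three leaves). The two facts are that $\pi(A)\Psi = \Psi$, and that the only vectors of $\overline{F\Psi}$ fixed by $\pi(A)$ are the scalar multiples of $\Psi$. Granting these, irreducibility follows by the usual argument: if $K \subseteq \overline{F\Psi}$ is a nonzero closed $F$-invariant subspace with orthogonal projection $P$, then $P$ commutes with every $\pi(g)$, so $P\Psi$ and $(1-P)\Psi$ are both fixed by $\pi(A)$ and hence both multiples of $\Psi$; being orthogonal, one of them is $0$, so $\Psi \in K$ or $\Psi \in K^\perp$, and cyclicity of $\Psi$ then gives $K = \overline{F\Psi}$ or $K = 0$.

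\emph{Step 1: $\pi(A)\Psi = \Psi$.} Write $A = \tfrac st$ and let $\wedge$ be the one-caret tree, so $\Psi$ is represented at level $2$ by the class of $(\wedge, \text{cup}/\sqrt d)$. Pick forests $f, h \in \mathcal F$ with $t = f\wedge$, $s = h\wedge$. Stabilising, $\Psi$ is also represented by $\tfrac{t}{\Phi(f)(\text{cup}/\sqrt d)}$ and by $\tfrac{s}{\Phi(h)(\text{cup}/\sqrt d)}$, so $\pi(A)\Psi = \tfrac{s}{\Phi(f)(\text{cup}/\sqrt d)}$. It therefore suffices to check that $\Phi(f)(\text{cup}) = \Phi(h)(\text{cup})$ in $\mathcal C_3$. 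But $\Phi(f)(\text{cup})$ is the cup with one of its two strands carrying a single trivalent vertex, i.e. the trivalent vertex with one leg bent around, and likewise for $\Phi(h)(\text{cup})$; since the trivalent vertex is rotation-invariant (equivalently, since $\mathcal C_3$ is one-dimensional) these two vectors coincide, both being $\sqrt d\,\Omega$, so $\pi(A)\Psi = \Psi$. (Equivalently, by the Proposition $\langle\pi(A)\Psi,\Psi\rangle$ equals $\tfrac1d$ times the evaluation of the closed tangle $\mathbb T_A$ with vertices replaced by $Y$; this reduces to one circle $= d$, and equality in Cauchy--Schwarz together with $\|\pi(A)\Psi\| = \|\Psi\| = 1$ gives $\pi(A)\Psi = \Psi$.) This coincidence is the ``remarkable piece of luck'': the cup $\Psi$, unlike the vacua used in \cite{Jnogo}, is fixed by $A$.

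\emph{Step 2 (the crux): $\ker(\pi(A) - 1) = \mathbb C\Psi$ in $\overline{F\Psi}$.} Since $\pi(A)\Psi = \Psi$, the line $\mathbb C\Psi$ and its orthocomplement are invariant under $\pi(A)$, so it is enough to show $\pi(A)$ has no eigenvalue $1$ on $\Psi^\perp$. Equivalently, the mean ergodic projection onto $\ker(\pi(A)-1)$, i.e. the strong limit $\lim_{N}\tfrac1N\sum_{k=0}^{N-1}\pi(A)^k$, should be $|\Psi\rangle\langle\Psi|$. As $\|\pi(A)^k\| = 1$ and $\{\pi(g)\Psi : g \in F\}$ is total, it suffices to prove, for all $g, h \in F$,
$$\frac1N\sum_{k=0}^{N-1}\langle\pi(g^{-1}A^kh)\Psi,\Psi\rangle \;\longrightarrow\; \langle\pi(h)\Psi,\Psi\rangle\,\overline{\langle\pi(g)\Psi,\Psi\rangle}\qquad (N\to\infty).$$
By the Proposition the $k$-th term is $\tfrac1d$ times the evaluation in $\mathcal C$ of the closed tangle $\mathbb T_{g^{-1}A^kh}$ with vertices set equal to $Y$, and the decisive feature is that $A$ has ``North--South'' dynamics on the Cantor set of ends of the infinite binary tree: under forward iteration every end except one repelling fixed end is drawn into a single attracting fixed end. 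I expect that this forces, for all $k$ larger than a bound depending only on $g$ and $h$, the literal identity $\langle\pi(g^{-1}A^kh)\Psi,\Psi\rangle = \langle\pi(h)\Psi,\Psi\rangle\,\overline{\langle\pi(g)\Psi,\Psi\rangle}$: in the tree pair representing $g^{-1}A^kh$ the ``$A^k$ strip'' is then long enough to separate the carets coming from $h$ from those coming from $g$, and, using $Y^*Y = 1$ and $\dim\mathcal C_1 = 0$ to remove bigons and lollipops, that strip collapses to a single arc joining the $h$-part to the $g$-part together with a fixed number of closed loops, so that counting circles reproduces exactly the stated product.

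\emph{The main obstacle} is this last diagrammatic estimate: one must make precise and uniform the assertion that $A^k$ ``stretches past'' the finitely many carets of any fixed $g$ and $h$, and bookkeep the resulting loops so that the powers of $d$ cancel and leave the clean factorisation. The dynamical control this requires, equivalently the fact that the spectral measure of $\pi(A)$ has its only atom at the eigenvalue $1$ with one-dimensional eigenspace $\mathbb C\Psi$, is of the kind carried out in \cite{AJ}, and I would rely on that analysis to finish the argument.
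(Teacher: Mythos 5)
Your overall skeleton is exactly the paper's: show $\pi(A)\Psi=\Psi$, show that $\Psi$ is (up to scalars) the only $A$-fixed vector in $\overline{F\Psi}$ by exhibiting the projection $P_\Psi$ as a limit of the $\pi(A^k)$, and then run the standard commutant argument. Your Step 1 is correct and matches the paper (one-dimensionality of $\mathcal C_3$, or equivalently rotation invariance of $Y$, plus unitarity/Cauchy--Schwarz). But Step 2, which you yourself flag as ``the crux'' and ``the main obstacle,'' is precisely the content of the paper's main lemma, and you do not prove it: you assert that for $k$ large one has the literal factorisation $\langle \pi(g^{-1}A^k h)\Psi,\Psi\rangle=\langle h\Psi,\Psi\rangle\overline{\langle g\Psi,\Psi\rangle}$, motivate it by North--South dynamics, and then defer to \cite{AJ}. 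That reference is not a substitute for the argument (the paper cites it only for supplementary information on spectral measures); the factorisation is proved in the paper itself, by a specific diagrammatic computation using the Guba--Sapir/Belk reduction algorithm: one writes out the unreduced diagram for $h^{-1}A^n g$, observes that the only carets of $g$ that cancel against $A^n$ are those on the right branch of its top tree (and similarly for $h$), notes that the final round of cancellations is of the type $Y^*Y=1$ which \emph{does} hold in $\mathcal C$ and so can be undone, and then uses $\dim\mathcal C_2=1$ to split the resulting closed tangle into a connected sum $G\cdot H\cdot d^k$ whose powers of $d$ cancel to give exactly the product of vacuum coefficients.

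There is also a trap in your sketch that the paper warns about explicitly: the caret-cancellation move used to reduce the tree pair of $g^{-1}A^kh$ (the relation collapsing a cup-over-cap pair to the identity) does \emph{not} hold in $\mathcal C$, so you cannot evaluate the unreduced diagram in the category and ``remove bigons'' freely. You must first reduce at the level of the group (where that move is legal), identify which of the performed cancellations are of the $Y^*Y=1$ type that survive in $\mathcal C$, and only then pass to the categorical evaluation. Your phrase ``using $Y^*Y=1$ \dots to remove bigons'' conflates the two kinds of moves. Minor additional points: the Ces\`aro averaging is unnecessary, since the coefficient is eventually \emph{constant} in $k$ so the weak limit of $\pi(A^k)$ itself exists; and once that limit is identified as $P_\Psi$, the cleaner conclusion is the paper's one-line argument that any $t$ in the commutant satisfies $t\Psi=\lambda\Psi$ and hence $t=\lambda$ by cyclicity.
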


Our proof will rely heavily on an algorithm for calculating the product of two elements of $F$ which is to be found in \cite{belk} and \cite{gus}.
Let us describe it:

Given two pairs $P$ and $Q$ of (binary planar rooted) trees represented diagrammatically as in the introduction, place 
$P$ on top of $Q$ in the plane and join them as below :

  \mbox{   \qquad} \hspace {1.1in}  \vpic {belk1} {1in}
  
  Then apply the diagrammatic rules \vpic {rule1} {0.5in} and \vpic {rule2} {0.5in} (and some isotopy) until they cannot be applied any more:
  
  \vpic {belk1} {0.8 in} $\rightarrow$ \vpic {belk2} {0.8 in} $\rightarrow$ \vpic {belk3} {0.8 in} $\rightarrow$ \vpic {belk4} {0.8 in} 
  
  At this point one may draw a curve through the diagram so that all the vertices below it are Y's and all above it are upside down Y's.
  Straightening that curve to a horizontal line thus gives a pair of trees that give the product of the two original $F$ elements $P$ and $Q$:
  
  \vpic {belk5} {1.5in} $\rightarrow$ \vpic {belk6} {1.5in}

\emph{NB} It is important to note that, although the diagrams involved in calculating $gh$ make
sense in the category $\mathcal C$, one may NOT use them to calculate vacuum expectation values
until all the cancellations are done. This is because the relation \vpic {rule1} {0.5in} does not hold in $\mathcal C$.

The following lemma is just a simple calculation but that calculation leads on to all the 
lucky accidents that make the proof work.Recall that $A$ is the generator of $F$ in \cite{CFP}.

\begin{lemma}
$\pi_Y(A)\Psi=\Psi$.
\end{lemma}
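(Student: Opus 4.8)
The plan is to compute $\pi_Y(A)\Psi$ inside the inductive limit Hilbert space $\mathfrak H^1_Y$ and reduce the claimed equality to a single relation in $\mathcal C_3$. First I would write $c:=\sqrt d\,\Psi\in\mathcal C_2$ for the (un‑normalised) cup and let $\tau$ denote the unique binary tree with two leaves, so that $\sqrt d\,\Psi=\frac{\tau}{c}$. The generator $A=x_0$ of \cite{CFP} is the pair of three‑leaf trees whose domain (bottom) tree $t$ is the caret whose \emph{right} leaf carries a second caret and whose range (top) tree $s$ is the caret whose \emph{left} leaf carries a second caret; in the notation of Definition~\ref{genforest} with $n=2$ this is exactly $t=f_2\circ\tau$ and $s=f_1\circ\tau$. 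Hence, by the definition of the connecting maps of $S_{\Phi^1}$,
$$\sqrt d\,\Psi=\frac{t}{\Phi^1(f_2)(c)}=\frac{s}{\Phi^1(f_1)(c)} .$$
Applying $\pi_Y(A)=\pi_Y\!\bigl(\tfrac{s}{t}\bigr)$ to the first expression and using $\tfrac{s}{t}\bigl(\tfrac{t}{x}\bigr)=\tfrac{s}{x}$ gives $\sqrt d\,\pi_Y(A)\Psi=\frac{s}{\Phi^1(f_2)(c)}$. Since the connecting maps are isometries, the map $x\mapsto\tfrac{s}{x}$ into the limit is injective, so comparison with the second expression for $\sqrt d\,\Psi$ shows that the Lemma is \emph{equivalent} to the single identity $\Phi^1(f_1)(c)=\Phi^1(f_2)(c)$ in $\mathcal C_3$, i.e.\ that growing a trivalent vertex on the left strand of the cup gives the same element of $\mathcal C_3$ as growing it on the right strand. (The choice of convention $A=\tfrac st$ versus $\tfrac ts$, and of which tree is the domain, only swaps the roles of $f_1$ and $f_2$ and is therefore irrelevant.)

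To establish that identity I would argue as follows. The operators $\Phi^1(f_1),\Phi^1(f_2)$ are post‑composition by $Y\otimes\mathrm{id}_X$ and $\mathrm{id}_X\otimes Y$, which are isometries because $Y^*Y=\mathrm{id}_X$; hence $\Phi^1(f_1)(c)$ and $\Phi^1(f_2)(c)$ both have norm $\|c\|=\sqrt d$ and in particular are non‑zero. As $\dim\mathcal C_3=1$, both are non‑zero scalar multiples of the trivalent vertex $Y$ bent into $\mathcal C_3=Mor_{\mathcal C}(1,X^{\otimes3})$. The planar rotation $\rho$ on $\mathcal C_3$ (with $\rho^3=\mathrm{id}$) carries the diagram of $\Phi^1(f_1)(c)$ to that of $\Phi^1(f_2)(c)$: one click of $\rho$ turns ``a $Y$ grown on the left leg of the cup'' into ``a $Y$ grown on the right leg''. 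Since $Y$ is invariant under the rotation and spans $\mathcal C_3$, the map $\rho$ is the identity on $\mathcal C_3$, so $\Phi^1(f_1)(c)=\rho\bigl(\Phi^1(f_1)(c)\bigr)=\Phi^1(f_2)(c)$, as required. (Equivalently, via the preceding Proposition the Lemma says that the closed trivalent graph obtained from $\mathbb T_A$ — a tetrahedron — evaluates to $d$; but the inductive‑limit route above sidesteps computing it.)

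The step I expect to be the real content — the ``lucky accident'' on which the rest of the paper turns — is this last one: I must verify on the picture that it is a \emph{single rotation}, not a reflection, that interchanges the two diagrams, and I must invoke the rotational invariance of $Y$, rather than merely $\dim\mathcal C_3=1$, in order to upgrade ``equal up to a scalar of modulus $1$'' to honest equality. The coincidence is special to $A$: the two ways of refining the caret $\tau$ into a three‑leaf tree are mirror images of one another and $Y$ is symmetric, whereas for any other generator the corresponding pair of diagrams is not related by a planar symmetry of $\mathcal C$, so no such cancellation is available.
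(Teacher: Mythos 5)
Your proof is correct, but it is not the paper's proof: the paper instead uses the wysiwyg coefficient formula to evaluate $\langle \pi_Y(A)\Psi,\Psi\rangle$ as a closed diagram in $\mathcal C$ (a theta graph, which the skein relations reduce to $d\langle\Psi,\Psi\rangle$), and then concludes $\pi_Y(A)\Psi=\Psi$ from unitarity, i.e.\ from the equality case of Cauchy--Schwarz. Your route --- stabilising $\frac{\tau}{c}$ over the two three-leaf trees of $A$, reducing the claim to $\Phi^1(f_1)(c)=\Phi^1(f_2)(c)$ in the one-dimensional space $\mathcal C_3$, and settling the phase ambiguity by the rotational invariance of $Y$ --- is exactly the ``direct'' argument the paper explicitly leaves to the reader immediately after its proof, and it buys something the paper's argument does not: it makes no use of unitarity, so the fixed-vector statement survives in the non-unitary setting. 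You also correctly isolate the genuine content (one-dimensionality alone gives only equality up to a unimodular scalar; rotation invariance of $Y$ is what removes it). One small slip in your closing aside: the closed graph computing $\langle\pi_Y(A)\Psi,\Psi\rangle$ comes from $\mathbb T_A$ (root edges deleted) and is a \emph{theta} graph, evaluating to $d$; the tetrahedron is what you get from $T_A$ and computes $\langle\pi(A)\Omega,\Omega\rangle$, which is precisely \emph{not} equal to $\|\Omega\|^2$ --- that distinction is the reason $\Psi$, and not $\Omega$, is the fixed vacuum. Since the aside is not used in your argument, this does not affect its validity.
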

\begin{proof}
The two-tree picture of $A$ is  \vpic {A1} {0.5in} so the diagram in $\mathcal C$ giving 
$\langle A\Psi, \Psi\rangle$ is \vpic {A2} {0.5in} which by the rules in $\mathcal C$ is
the same as $d\langle \Psi, \Psi\rangle$. By the unitarity of $\pi_Y$ we are done.
\end{proof}
We leave it to the reader to deduce this result directly from the definition of the action of
group elements on vectors in the direct limit so that the result actually holds even in the 
non-unitary case.
(Note that we have suppressed the $\pi_Y$ in the inner product formula. We will continue in this way.)
\begin{lemma} Let $g$ and $h$ be in $F$. Then 
$$\langle A^ng\Psi, h\Psi\rangle=    \langle g\Psi,\Psi\rangle\langle \Psi,h\Psi\rangle$$ for sufficiently large $n$.
\end{lemma}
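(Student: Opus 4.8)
The plan is to exploit the Belk–Guba–Sapir multiplication algorithm recalled above together with the fact (Lemma 4.0.1) that $\Psi$ is fixed by $A$. The key observation is that $A$ acts on dyadic intervals by "shrinking toward the left endpoint": in tree terms, $A^n$ corresponds to a pair of trees whose bottom tree is the right-vine of length $n$ (all carets stacked along the rightmost branch) and whose top tree has a single caret at the left. Concretely, I would first record that for any fixed $g \in F$, written as a pair of trees $\frac{s}{t}$, once $n$ is large enough the left-hand tree of a reduced representative of $A^n g$ is obtained from that of $A^n$ by grafting $s$ only into the leftmost region, while the branching of $t$ (the "denominator" side) has been pushed entirely into the first few leaves; symmetrically $h\Psi$ is represented by a pair of trees all of whose nontrivial structure lives near the left. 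The point of taking $n$ large is to move the support of $g$ entirely "inside" the leftmost dyadic interval that $A^n$ isolates, so that the diagram computing $\langle A^n g\Psi, h\Psi\rangle$, after running the cancellation rules \vpic {rule1} {0.5in} and \vpic {rule2} {0.5in} to completion, splits as a planar diagram into two disjoint tangles sitting side by side: one carrying all of the $g$-structure (closed up against a single strand), and one carrying all of the $h$-structure.

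Next I would argue that, because $\Psi$ is the normalized single cup in $Mor_{\mathcal C}(1, X\otimes X)$ and is $A$-fixed, the "middle" portion of the fully-reduced diagram — the part coming from the long right-vine of $A^n$ closed off against $\Psi$ — reduces via the skein relations of $\mathcal C$ to a single through-strand with no extra scalar (this is exactly the mechanism of Lemma 4.0.1, applied $n$ times). After this reduction the diagram evaluates, by Proposition 3.0.1, to $\frac1d$ times a product of two closed scalars: the one for $g$ against the vacuum and the one for $h$ against the vacuum. Being careful with the single factor of $\frac1d$ in Proposition 3.0.1 — it appears once on the left side and must be matched against the $\frac1d$'s hidden in $\langle g\Psi,\Psi\rangle$ and $\langle\Psi,h\Psi\rangle$ — I would check that the bookkeeping yields exactly $\langle g\Psi,\Psi\rangle\,\langle\Psi,h\Psi\rangle$ with no stray power of $d$.

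I expect the main obstacle to be making the geometric claim "for sufficiently large $n$ the reduced diagram factors" fully rigorous, i.e. exhibiting an explicit $n_0 = n_0(g,h)$ and showing the Belk cancellations genuinely disconnect the diagram. The clean way is to pass to a common refinement: choose a single tree $u$ large enough that both $g$ and $h$ are represented on the standard dyadic partition determined by $u$, take $n$ with $2^{-n}$ smaller than the length of every interval in that partition that touches $0$, and then verify combinatorially that the top tree of $A^n$ restricted away from its leftmost leaf is a straight forest, so composing with $g$ on the nose requires no further stabilization on that part. Once the diagram is seen to be a side-by-side union of a $g$-tangle-closed-up-with-$\Psi$ and an $h$-tangle-closed-up-with-$\Psi$, the evaluation is forced and the identity drops out; the only remaining care is the warning in the NB above, namely that one must run \emph{all} cancellations \emph{before} evaluating in $\mathcal C$, since rule \vpic {rule1} {0.5in} is not a valid skein relation.
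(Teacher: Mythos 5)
Your proposal follows the same overall route as the paper: rewrite the quantity as the vacuum coefficient of $h^{-1}A^ng$, run the Belk--Guba--Sapir cancellation algorithm, observe that for large $n$ the $g$- and $h$-structure become separated by a long remnant of $A^n$, and evaluate the resulting closed diagram in $\mathcal C$ as a product of a $g$-contribution, an $h$-contribution and a trivial middle. But there is a genuine gap at the key step, namely \emph{why} the diagram factors and with what scalars. The fully reduced diagram does not ``split into two disjoint tangles sitting side by side'': it is a single connected pair of trees. The mechanism the paper uses is that, after the first round of cancellations (which use the rule that is \emph{not} a skein relation in $\mathcal C$ and so must be performed before any evaluation), every remaining cancellation is of the second type, which \emph{is} a valid relation in $\mathcal C$; one therefore \emph{undoes} exactly those last moves and evaluates the resulting partially reduced closed diagram in $\mathcal C$. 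In that picture the $g$-part and the $h^{-1}$-part each sit inside a region meeting the rest of the diagram in two boundary points, and since $\dim\mathcal C_2=1$ each such region is a scalar ($G$, resp.\ $H$) times a single strand. This is a connected-sum decomposition, not a disjoint union: each of the two cuts costs a multiplicative factor of $1/d$. Without identifying this mechanism your factorization claim is unjustified and carries the wrong normalization.

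Relatedly, the bookkeeping you defer does not close as you have set it up: the middle piece (the surviving power of $A$) does not reduce ``to a single through-strand with no extra scalar'' --- once closed up it is a loop and contributes a factor of $d$. The correct tally is $d\langle A^ng\Psi,h\Psi\rangle=\frac{1}{d^{2}}\cdot d\cdot GH$ with $G=d\langle g\Psi,\Psi\rangle$ and $H=d\langle h^{-1}\Psi,\Psi\rangle$, and it is precisely the loop's factor of $d$ that cancels the two connected-sum factors of $1/d$; your version, taken at face value, is off by a power of $d$. Your scheme for making ``sufficiently large $n$'' precise (a common refinement of the partitions for $g$ and $h$, with $n$ chosen so the vine of $A^n$ clears it) is a reasonable substitute for the paper's pictorial argument, but the two points above are where the actual content of the proof lives and they are missing.
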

\begin{proof} Observe that $\langle A^ng\Psi, h\Psi\rangle=  \langle h^{-1} A^ng\Psi, \Psi\rangle$ so we must calculate the vacuum expectation
value of $h^{-1} A^ng$.

The diagram for $h^{-1}A^ng$ before cancellation is (with $n$ sloping lines in the $A^n$ part):

\vpic {gAnh-1} {1in}

 Now take the "pair of trees"  picture of $g$ and isolate the vertices on the  right branch of the top tree as below:

$\mbox{               } g=$ \quad \vpic {g} {1.5in}

Look near the top of $g$ and the bottom of $A^n$:

$\mbox{                                     } $\qquad \qquad \vpic {bottom} {1.5in}

 For sufficiently large $n$ one may apply \vpic {rule1} {0.5in} 
enough times to cancel all the right branch vertices in the top half of $g$ to obtain:

$\mbox{                                     } $\qquad \qquad \vpic {aftercancel} {1.5in}

At this stage all the strings connecting what is left of $A^n$ to $g$ are connected at the bottom as they were in $g$, which
is reduced. So provided $n$ is large enough so there is no interference between
$g$ and $h^{-1}$, the only cancellation that can occur in the picture is the leftmost of these strings, marked "a" below, which can only
cancel with a vertex of $A^n$ if it is connected directly to the leftmost branch of the top tree of $g$.

$\mbox{                                     } $\qquad \qquad \vpic {lastcancel} {2in}

After the string $a$ has been cancelled the only other cancellation is $b$ and so on. Once all these 
cancellations are done, no more can happen. The same considerations apply to $h^{-1}$ so we may
calculate $ \langle h^{-1} A^ng\Psi, \Psi\rangle$ by looking at the resulting figure in the trivalent 
category. But before doing that observe that the  last round of  cancellations are all of the form 
\vpic {rule2} {0.5in} which is also true in $\mathcal C$ ! So we may undo them and obtain
the picture below:

$\mbox{                                     } $\qquad \qquad \vpic {whole} {1.5in}   what's left of $A^n$

So that  $\langle h^{-1}A^n g\Psi,\Psi\rangle$ is ${1\over d}$ times the evaluation of
the above picture in $\mathcal C$. But now isolate the parts of the picture involving $g$ and $h^{-1}$ as
below:

$\mbox{                                     } $\qquad \qquad \vpic {tocut} {1.5in}

The parts enclosed in the red double dashed regions are actually elements of $\mathcal C_2$ which is one dimensional.
Hence they are scalar multiples of a single curve joining the boundary points.  
To find the scalar multiple cut out the parts inside the red dotted lines and join the boundary points to obtain (for $g$, $h$ is similar) :

G=$\mbox{                                     } $\qquad \qquad \vpic {cutout} {1.5in}

The curves which started out connected to the top right branch of $g$, can be moved back there, the top of the picture
straightened, and we recognise the picture of $g$ that we started with! And similarly a factor $H$ at the top.
 By wysiwyg 
 we have $$G=d \langle g\Psi,\Psi\rangle \mbox{  and  } H=d\langle h^{-1}\Psi,\Psi\rangle.$$
 By easy planar algebra arguments,
splitting a "connected sum" of two closed tangles as the product of individual closed tangles incurs a multiplicative factor of
${1\over d}$.

  The three pictures resulting from the red double dash decomposition are $G$, $H$ and  $A^k$ for some $k$ with $0\leq k\leq n$, to which we may
apply \vpic {rule2} {0.5in} $k$ times  to obtain just $d$.

Altogether $d\langle A^ng\Psi, h\Psi\rangle= {d\over d^2}GH$
so $\langle A^ng\Psi, h\Psi\rangle=\displaystyle{{G\over d}{H\over d}}= \langle g\Psi,\Psi\rangle\langle \Psi,h\Psi\rangle$.

It is interesting that the same calculation works for $\Phi^X$ right up to this point. But because of the
extra string joining the top to the bottom, a multiplicative factor of $\displaystyle (\frac{d-2}{d-1})^k$ appears which means
$A^n$ actually tends weakly to zero. Indeed the projection onto $\mathbb C \Omega$ does not even commute
with $A$ in this case since if it did, $|\langle A\Omega,\Omega\rangle|$ would be $1$.
\end{proof}
\begin{corollary}
The weak limit of $\pi_Y(A^n)$ as $n\rightarrow \infty$ on the closure of the $F$-linear span of $\Psi$ is equal to $P_\Psi$, the orthogonal 
projection onto the linear span of $\Psi$.
\end{corollary}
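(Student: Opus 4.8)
The plan is to upgrade the matrix--coefficient identity of the preceding Lemma to convergence in the weak operator topology by a routine density and uniform--boundedness argument; there is no new geometric input. Write $\mathcal H_\Psi$ for the closed $F$-linear span of $\Psi$ (on which $\pi_Y(A^n)$ acts, since it is $F$-invariant), and recall that, $\Psi$ being a unit vector, the orthogonal projection onto $\mathbb C\Psi$ is the rank-one operator $P_\Psi\zeta=\langle\zeta,\Psi\rangle\Psi$, so that for all $g,h\in F$
$$\langle P_\Psi\,g\Psi,\,h\Psi\rangle=\langle g\Psi,\Psi\rangle\,\langle\Psi,h\Psi\rangle.$$
This is exactly the right-hand side appearing in the Lemma. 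Hence for each fixed pair $g,h\in F$ the Lemma says $\langle \pi_Y(A^n)g\Psi,h\Psi\rangle=\langle P_\Psi\, g\Psi,h\Psi\rangle$ for all sufficiently large $n$; in particular these matrix coefficients converge (indeed are eventually constant) to the corresponding coefficients of $P_\Psi$.

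First I would pass from the vectors $g\Psi$ to finite linear combinations $x=\sum_i c_i\,g_i\Psi$ and $y=\sum_j d_j\,h_j\Psi$: expanding $\langle \pi_Y(A^n)x,y\rangle$ bilinearly and taking $n$ larger than the finitely many thresholds the Lemma supplies for the pairs $(g_i,h_j)$ gives $\langle \pi_Y(A^n)x,y\rangle=\langle P_\Psi x,y\rangle$ for all large $n$. Such $x,y$ are dense in $\mathcal H_\Psi$ by definition. Now fix $\xi,\eta\in\mathcal H_\Psi$ and $\varepsilon>0$, choose $x,y$ in this dense subspace with $\|\xi-x\|<\varepsilon$ and $\|\eta-y\|<\varepsilon$, and run the standard three-term estimate
$$|\langle\pi_Y(A^n)\xi,\eta\rangle-\langle P_\Psi\xi,\eta\rangle|\le|\langle\pi_Y(A^n)\xi,\eta\rangle-\langle\pi_Y(A^n)x,y\rangle|+|\langle\pi_Y(A^n)x,y\rangle-\langle P_\Psi x,y\rangle|+|\langle P_\Psi x,y\rangle-\langle P_\Psi\xi,\eta\rangle|.$$
The first and third terms are of order $\varepsilon$, using $\|\pi_Y(A^n)\|=1$ (unitarity of the Wysiwyg representation) and $\|P_\Psi\|=1$; the middle term vanishes for $n$ large by the previous step. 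Letting $n\to\infty$ and then $\varepsilon\to0$ yields $\langle\pi_Y(A^n)\xi,\eta\rangle\to\langle P_\Psi\xi,\eta\rangle$, i.e. $\pi_Y(A^n)\to P_\Psi$ in the weak operator topology on $\mathcal H_\Psi$.

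I do not anticipate a genuine obstacle: all the content is carried by the Lemma, and what remains is the elementary fact that a uniformly bounded sequence of operators converging weakly on a dense set converges weakly everywhere. The one point deserving a word of care is that the thresholds in the Lemma depend on $g$ and $h$ and are not uniform, so one should not expect strong or norm convergence; but for the weak topology, pointwise-in-$(g,h)$ convergence on the spanning family $\{g\Psi\}$ is precisely what the three-$\varepsilon$ argument consumes. This corollary is the engine behind the Theorem --- any operator commuting with $\pi_Y(F)$ must commute with the weak limit $P_\Psi$, hence preserves the line $\mathbb C\Psi$ and is therefore scalar on all of $\mathcal H_\Psi$ --- but that deduction belongs to the proof of the Theorem rather than here.
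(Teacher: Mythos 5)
Your proposal is correct and is essentially the paper's own argument: the paper likewise extends the Lemma's matrix-coefficient identity bilinearly to the dense span of $\{g\Psi\}$ and then invokes unitarity of $\pi_Y$ (uniform boundedness) to pass to the weak limit on all of the closed span. You have simply written out the three-$\varepsilon$ estimate that the paper leaves implicit.
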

\begin{proof} The formula for $P_\Psi$ is $P_\Psi(\eta)=\langle \Psi,\eta\rangle \Psi$. Linear extension of the result of the
previous lemma gives  $\langle A^n\xi, \eta\rangle=  \langle P_\Psi \xi,\eta\rangle$ for $\xi$ and $\eta$ in a dense subspace 
and the corollary follows from the unitarity of $\pi_Y$.
\end{proof}

\begin{proof} The proof of the theorem is now easy. For let $\mathfrak H $ be the closed $F$-linear span of $\Psi$ on which $\pi_Y$ acts.
Suppose $t$ is a bounded operator on $\mathfrak H$ commuting with $\pi_Y(F)$. Then by the previous corollary $t\Psi=\lambda\Psi$
for some scalar $\lambda$. Thus $tg\Psi=\lambda g\Psi$ for all $g\in F$ so taking linear combinations we see that $t=\lambda id$
on all of $\mathfrak H$.
\end{proof}
\begin{corollary}The Wysiwyg representations from $\Phi^1$ are mutually inequivalent for different values of $d$.
\end{corollary}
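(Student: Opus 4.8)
The plan is to distinguish the representations by a \emph{canonical} family of numerical invariants, namely the vacuum coefficients $g\mapsto \langle \pi_Y(g)\Psi,\Psi\rangle$, $g\in F$, and then to show this family genuinely depends on $d$. The key structural fact, already contained in the Corollary and the first Lemma, is that the line $\mathbb C\Psi$ is intrinsic to the representation: since the weak limit of $\pi_Y(A^n)$ is $P_\Psi$, any vector $\xi$ fixed by $\pi_Y(A)$ satisfies $\xi=\lim_n\pi_Y(A^n)\xi=P_\Psi\xi\in\mathbb C\Psi$, while $\Psi$ itself is $A$-fixed; so $\mathbb C\Psi$ is exactly the space of $\pi_Y(A)$-fixed vectors.

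First I would fix admissible parameters $d\neq d'$, write $\pi_Y,\pi_Y'$ for the corresponding Wysiwyg representations of $F$ coming from $\Phi^1$, acting on $\mathfrak H,\mathfrak H'$ with vacua $\Psi,\Psi'$, and suppose for contradiction that $U\colon\mathfrak H\to\mathfrak H'$ is a unitary intertwiner. Then $\pi_Y'(A)(U\Psi)=U\pi_Y(A)\Psi=U\Psi$, so by the previous paragraph (applied to $d'$) we get $U\Psi=c\Psi'$ with $|c|=1$. Transporting matrix coefficients through $U$ and cancelling $|c|^2=1$ yields
$$\langle \pi_Y(g)\Psi,\Psi\rangle=\langle \pi_Y'(g)\Psi',\Psi'\rangle\qquad\text{for all }g\in F.$$

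Next I would invoke the Wysiwyg Proposition: $\langle \pi_Y(g)\Psi,\Psi\rangle$ equals $\tfrac1d$ times the scalar obtained by replacing every vertex of $\mathbb T_g$ by $Y$ and evaluating the resulting closed diagram in $\mathcal C$. Because the skein relations defining $\mathcal C$ have coefficients rational in $d$, each such closed evaluation is a rational function of $d$, so the displayed equality would force a system of rational identities relating $d$ and $d'$. To derive the contradiction it suffices to exhibit a single element $g_0\in F$ whose vacuum coefficient, as a function of $d$, is injective on the parameter domain $\{4\cos^2(\pi/n)-1 : n\ge 4\}\cup[3,\infty)$. Concretely I would take a small explicit $g_0$ (for instance the second standard generator of $F$, or the element $D$ of the introduction), compute the closed trivalent diagram coming from $\mathbb T_{g_0}$ using the bigon and triangle relations of \cite{MPS}, and read off a coefficient of the shape $\tfrac1d\cdot(\text{rational in }d)$; the computation in the proof of the preceding Lemma, where a factor $\big(\tfrac{d-2}{d-1}\big)^k$ appeared, already shows that exactly this kind of non-constant rational dependence is produced by such diagrams. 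Since $\tfrac{d-2}{d-1}=1-\tfrac1{d-1}$ is injective on $(1,\infty)$, an $g_0$ with a coefficient of this form separates all admissible values of $d$.

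The main obstacle is this last step: one must actually carry out the skein evaluation of a specific closed trivalent diagram and verify that the resulting rational function of $d$ is injective on the admissible set (or, failing injectivity at the finitely many degenerate small values, that a finite collection of vacuum coefficients jointly separates the parameters). Everything else — canonicity of $\Psi$, the fact that an intertwiner must send $\Psi$ to a unimodular multiple of $\Psi'$, and the resulting equality of vacuum coefficients — is formal and rests only on the Corollary, the first Lemma, and the Wysiwyg Proposition already established.
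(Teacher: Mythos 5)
Your strategy is exactly the paper's: the line $\mathbb C\Psi$ is canonical because it is precisely the $\pi_Y(A)$-fixed subspace (via the weak convergence $\pi_Y(A^n)\to P_\Psi$), so a unitary intertwiner carries $\Psi$ to a unimodular multiple of $\Psi'$ and forces equality of all normalised vacuum coefficients, after which one needs a single coefficient that depends injectively on $d$. The only thing you leave undone is that one concrete ingredient, and the paper supplies exactly what you predicted: for the element $D$ of the introduction the relations of \cite{MPS} give $\langle \pi_Y(D)\Psi,\Psi\rangle\,/\,\langle\Psi,\Psi\rangle=\frac{d-2}{d-1}$, which is injective on the admissible parameter set. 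To make your argument complete you need only carry out (or cite) that single skein evaluation, rather than inferring its plausibility from the $\bigl(\frac{d-2}{d-1}\bigr)^{k}$ factor, which arose in the $\Phi^X$ (not the $\Phi^1$) computation.
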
 
\begin{proof} The vector $\Psi$ when normalised is unique up to a scalar of modulus one, as a vector spanning the
1-eigenspace of A. But we can calculate, using the relations of \cite{MPS}, $\langle X\Psi,\Psi\rangle=d\frac{d-2}{d-1}$. But $\langle \Psi,\Psi\rangle=d$
so two Wysiwyg representations can only be equivalent if the values of $d$ are equal.

\end{proof}

\begin{remark} We know that the inductive limit Hilbert space also carries a unitary representation of Thompson's group $T$, extending
that of $F$ and also called $\pi_Y$ Exactly the same argument as above shows that $T$ acts irreducibly on the closed $T$-linear 
span of $\Psi$ but we do not know if that is the same Hilbert space as $\mathfrak H$.
\end{remark}

\section{Improvements}

We present two improvements on the previous results by extending from the $F$-linear span of the vacuum vector to 
the whole direct limit Hilbert space.

a) Telling the representations apart. 

It is curious that all the representations of $F$ the kind we are considering contain the coefficients of the vacuum 
by weak limits of certain elements. Which means that the representations are inequivalent as soon as the corresponding
vacuum ones are.

To be more precise let $\sigma:F\rightarrow F$ be the endomorphism which sends an element of $F$ (viewed as a 
PL homeomorphism of $[0,1]$) to itself rescaled and acting in $[\frac{1}{2},1]$ and by the identity on $[0,\frac{1}{2}]$.

Diagrammatically we have, for $g=\frac{a}{b}$ :

$$\sigma(g)=  \vpic {sigma} {1in}  $$

\begin{theorem} Let $g\in G$ and 
Then $$\pi_{Y,p}(\sigma^n(g)) \mbox{tends weakly to } \langle \pi (g) \Omega,\Omega\rangle id \mbox{ for any }  p \mbox{ as in } \ref{proj}$$
\end{theorem}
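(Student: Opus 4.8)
The plan is to mimic the structure of the proof of Lemma 4.0.2 and its Corollary, replacing the single generator $A$ by the endomorphism $\sigma$ and the $F$-span of the vacuum by the whole direct limit Hilbert space $\mathfrak H_{Y,p}$. First I would record the diagrammatic effect of $\sigma^n$ on a pair of trees: applying $\sigma$ grafts an extra caret onto the rightmost leaf, so $\sigma^n(g)$ has, near its top-right and bottom-right branches, a long "comb" of $n$ vertices (exactly the $A^n$-type configuration that appeared in Lemma 4.0.2, since $A$ itself is $\sigma$ applied to the identity-like element, up to stabilisation). The key point is that a vector $\xi\in\mathfrak H_{Y,p}$ is represented by some $\frac{t}{x}$ with $x\in\mathcal C_m$, and for $n$ large the support of $\sigma^n(g)$ in the diagram is eventually disjoint from (does not interfere with) the part of the diagram coming from $x$ and from the bra vector $\eta=\frac{s}{y}$; so the Belk--Guba cancellation algorithm applied to $\langle \pi_{Y,p}(\sigma^n(g))\,\xi,\eta\rangle$ proceeds in the same controlled way as in Lemma 4.0.2.

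Next I would run the cancellation/uncancellation argument verbatim: push the rule \vpic {rule1} {0.5in} to eliminate the right-branch vertices living between $g$ and the $\sigma^n$-comb, observe that the surviving cancellations are all of the form \vpic {rule2} {0.5in} (which \emph{is} valid in $\mathcal C$), undo them, and thereby reduce the computation of $\langle \pi_{Y,p}(\sigma^n(g))\xi,\eta\rangle$ to evaluating a diagram in $\mathcal C$ that splits, via the "red double-dash" decomposition, into three pieces: a piece $G$ depending only on $g$, a piece built from $\xi$ and $\eta$ (and $p$), and a piece $A^k$ which collapses to a power of $d$. The connected-sum/planar-algebra bookkeeping (the $\tfrac1d$ factor per split) is identical to Lemma 4.0.2. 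The upshot is a formula of the shape $\langle \pi_{Y,p}(\sigma^n(g))\xi,\eta\rangle = \langle \pi(g)\Omega,\Omega\rangle\,\langle\xi,\eta\rangle$ for all $n$ sufficiently large (depending on $\xi,\eta$), on a dense set of $\xi,\eta$; since $G/d = \langle \pi(g)\Omega,\Omega\rangle$ is exactly the wysiwyg vacuum coefficient (Proposition 3.0.1) and the $\xi,\eta$-piece contracts to $\langle\xi,\eta\rangle$ because $\mathcal C_{\le 1}$ forces the relevant morphism space to be the scalars after the caps are applied. Then uniform boundedness ($\|\pi_{Y,p}(\sigma^n(g))\|=1$) plus density gives weak convergence of $\pi_{Y,p}(\sigma^n(g))$ to $\langle \pi(g)\Omega,\Omega\rangle\,\mathrm{id}$ on all of $\mathfrak H_{Y,p}$.

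The main obstacle I anticipate is verifying that the "no interference" step really goes through for a \emph{general} vector $\frac{t}{x}$ with $x$ an arbitrary element of $\mathcal C_m$ (not just a group-element vector) and for a general minimal projection $p\in\mathcal C_{2n_0}$: one must check that the cancellation algorithm never eats into the $x$-part or the $p$-part of the diagram once $n$ is large, and that the residual closed diagram factors as claimed — in particular that the middle piece is forced to be the scalar $\langle\xi,\eta\rangle$ rather than some $p$-dependent constant. I expect this to be handled by the same stabilisation argument already used implicitly throughout the paper (represent $\xi$ and $\eta$ over a common tree large enough to absorb $\sigma^n(g)$'s combs on the far right, where $x$ and $p$ have no support), together with $\dim\mathcal C_1 = 0$, $\dim\mathcal C_0 = \dim\mathcal C_2 = 1$, but it is the one place where the argument genuinely extends beyond Lemma 4.0.2 and so deserves a careful diagram. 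The inequivalence corollary for different $d$ then follows as before, since $\langle\pi(g)\Omega,\Omega\rangle$ for suitable $g$ (e.g. $g=D$) is a non-constant function of $d$ computable from the \cite{MPS} relations.
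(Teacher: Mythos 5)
Your proposal is correct in substance and arrives at the same final diagram as the paper, but by a different organisational route: you run the Belk--Guba cancellation machinery of the $\langle A^ng\Psi,h\Psi\rangle$ lemma (cancel carets, note the surviving cancellations are the relation that does hold in $\mathcal C$, undo them, split the closed tangle as a connected sum), whereas the paper never invokes the cancellation algorithm for this theorem. Instead it stabilises explicitly: taking $\xi=\frac{t_m}{x}$, $\eta=\frac{t_m}{y}$ over the full binary tree $t_m$, it writes $\sigma^n(g)=\frac{t_a}{t_b}$ with $t_a$ obtained by grafting a right-descending comb $r^{n-m}_a$ onto the last leaf of $t_m$, rewrites $\xi=\frac{t_b}{x_b}$ and $\eta=\frac{t_a}{y_a}$, and reads off $\langle\pi_{Y,p}(\sigma^n(g))\xi,\eta\rangle$ as a single closed tangle. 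That is exactly the resolution you anticipate for your ``no interference'' worry, and it is what makes the general-$x$, general-$p$ case painless: $x$ and $y^*$ meet the $g$-part only through the one rightmost string. Two points where the earlier template does not transfer verbatim and which your sketch should make explicit: first, the cut isolating $g$ is along a single $X$-string, i.e.\ along $Mor_{\mathcal C}(X,X)\cong\mathbb C$ rather than along $\mathcal C_2$ --- this is precisely why the limiting coefficient is the $\Omega$-coefficient $\langle\pi(g)\Omega,\Omega\rangle$ (the diagram $T_g$ with its root edges) and not the $\Psi$-coefficient of the $A^n$ lemma; you state the correct coefficient, but the mechanism is $\dim Mor(X,X)=1$, not ``$\mathcal C_{\le1}$''. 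Second, the leftover comb between $x$ and $y^*$ is an open ladder that collapses to parallel strings by $Y^*Y=1$ (contributing a factor $1$), not a closed $A^k$ contributing a power of $d$; if you insist on closing it off and splitting, the two connected-sum factors of $\frac1d$ do cancel the resulting $d\cdot d\,\langle\pi(g)\Omega,\Omega\rangle$ correctly, but this bookkeeping must be re-verified rather than copied. With those adjustments your argument, together with the density and uniform-boundedness step, matches the paper's.
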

\begin{proof} 

By unitarity as usual it suffices to show that
 $$\langle \pi_{Y,p}(\sigma^n(g))(\xi),\eta\rangle \rightarrow  \langle \pi (g) \Omega,\Omega\rangle\langle \xi,  \eta \rangle$$
for $\xi$ and $\eta$ in any finite dimensional approximant of the direct limit.  Let $t_m$ be the full bifurcating planar tree with $2^m$
leaves (such trees form a cofinite sequence) and let $x$ and $y$ be elements of $\mathcal C_{2^m}$. Then let 
$\displaystyle \xi=\frac{t_m}{x}$ and $\displaystyle \eta=\frac{t_m}{y}$ be the corresponding elements of the direct limit Hilbert space. We need to calculate
$\langle \pi_{Y,p}(\sigma^n(g))(\xi),\eta\rangle$ for large $n$. For any $k>0$ and any tree $s$ let $r^k_s$ be the tree with $n$ branches 
to the left attached to a single branch to the right, and $s$ attached to the end of the right branch, thus:

$$r^3_s = \vpic {attach} {1.5in}  $$

Suppose $n>m$ and that $a$ is a tree. Let $t_a$ be the tree obtained by attaching $r^{n-m}_a$ to the rightmost leaf of $t_m$. 
  Let $g$ be given by the pair of trees $\displaystyle \frac{a}{b}$. Then by drawing a diagram and cancelling lots of carets we see that $\displaystyle \frac{t_a}{t_b}=\sigma^n(\frac{a}{b})=\sigma^n(g)$. Moreover $$\frac{t_m}{x}=\frac{t_b}{ x_b}$$ where $ x_b$ is the element of $\mathcal C$ defined
  by attaching the diagram $r^{n-m}_b$ to the rightmost string emanating from a disc containing $x$ thus (illustrated for $m=3$ so there are 8
  vertical strings emanating from the disc containing $x$):
  
  $$x_b = \vpic {stabilisedx} {2in}  $$
  
  This picture is interpreted as an element of the planar algebra with the trivalent vertices being discs containing $Y$, as usual.

We have $$\sigma^n(g)(\xi)=\frac{t_a}{t_b}(\frac{t_b}{x_b})=\frac{t_a}{x_b}$$

and of course $$\eta=\frac{t_m}{y}=\frac{t_a}{y_a}$$ so working the $\mathcal C$ we see that 
$\displaystyle \langle \pi_{Y,p}(\sigma^n(g))(\xi),\eta\rangle$ is the value in $\mathcal C$ of the following diagram (illustrated for $n=6$):

 $$ \vpic {ipweak} {2in}  $$
 
 Since the dimension of $\mathcal C_1$ is one, we may surround the diamond shape part of the picture by a disc and use the unitarity
 of $Y$ to
 conclude that this picture is a multiple of $\langle \pi (g) \Omega,\Omega\rangle\langle \xi,  \eta \rangle$.
\end{proof} 
\begin{corollary} 
$\pi_{Y,p}$ is inequivalent to $\pi_{Y',q}$ whenever $\pi_{Y,1}$ is inequivalent to $\pi_{Y',1}$. 
\end{corollary}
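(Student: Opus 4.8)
The plan is to push the weak-limit Theorem just proved through a hypothetical intertwiner. Write $\mathfrak H_{Y,p}$ for the Hilbert space of $\pi_{Y,p}$ and, for $\gamma\in F$, set $\varphi_Y(\gamma):=\langle\pi_Y(\gamma)\Omega,\Omega\rangle$, the vacuum coefficient; by the Proposition of Section~3 this is $\tfrac1d$ times the scalar obtained by closing up $T_\gamma$ and evaluating in $\mathcal C$ with all vertices replaced by $Y$, so $\varphi_Y$ is a function of $Y$ alone. The Theorem says precisely that for every $\gamma\in F$ and every admissible $p$ (the case $p=1$ included) one has the weak operator limit $\pi_{Y,p}(\sigma^n(\gamma))\to\varphi_Y(\gamma)\,\mathrm{id}$ as $n\to\infty$.

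First I would prove the contrapositive. Suppose $U\colon\mathfrak H_{Y,p}\to\mathfrak H_{Y',q}$ is a unitary with $U\,\pi_{Y,p}(g)=\pi_{Y',q}(g)\,U$ for every $g\in F$, equivalently $\pi_{Y',q}(g)=U\,\pi_{Y,p}(g)\,U^{*}$. Put $g=\sigma^{n}(\gamma)$ and let $n\to\infty$. Since $T\mapsto UTU^{*}$ is weak-operator continuous and fixes every scalar operator, taking weak limits on both sides of $\pi_{Y',q}(\sigma^{n}(\gamma))=U\,\pi_{Y,p}(\sigma^{n}(\gamma))\,U^{*}$ and invoking the Theorem yields
$$\varphi_{Y'}(\gamma)\,\mathrm{id}\;=\;U\bigl(\varphi_{Y}(\gamma)\,\mathrm{id}\bigr)U^{*}\;=\;\varphi_{Y}(\gamma)\,\mathrm{id},$$
so that $\varphi_{Y}(\gamma)=\varphi_{Y'}(\gamma)$ for all $\gamma\in F$.

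It then remains to record that the positive-definite function $\varphi_Y$ determines $\pi_{Y,1}$ up to unitary equivalence: $\pi_{Y,1}$ is the cyclic representation generated by the vacuum vector, whose matrix coefficient in that vector is exactly $\varphi_Y$ (this is what the Proposition of Section~3 computes), so $\pi_{Y,1}$ is unitarily equivalent to the GNS representation attached to $\varphi_Y$, and two GNS representations of the same positive-definite function on a group are unitarily equivalent. Hence $\varphi_Y=\varphi_{Y'}$ forces $\pi_{Y,1}\cong\pi_{Y',1}$, which is precisely the contrapositive of the statement. I do not anticipate a genuine obstacle: the weak continuity of conjugation by a fixed unitary and the uniqueness of the GNS construction are routine, and the only point deserving a line of care is the identification of $\pi_{Y,1}$ with the GNS representation of $\varphi_Y$ — that the vacuum is cyclic for $\pi_{Y,1}$ with the advertised coefficients — which is immediate from the definition of the Wysiwyg representation together with the Proposition of Section~3. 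Combined with the fact, computed earlier in the paper, that these vacuum coefficients depend nontrivially on $d$, the upshot is that the whole family $\pi_{Y,p}$ is pairwise inequivalent across distinct values of $d$.
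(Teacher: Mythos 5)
Your argument is correct and is exactly the mechanism the paper intends: the corollary is stated there without a separate proof, resting on the preceding remark that the vacuum coefficients $\langle\pi(g)\Omega,\Omega\rangle$ are recoverable inside any $\pi_{Y,p}$ as weak limits of $\pi_{Y,p}(\sigma^n(g))$, and such scalar weak limits are preserved under conjugation by an intertwining unitary, so equivalence of any two $\pi_{Y,p}$, $\pi_{Y',q}$ forces equality of the coefficient functions and hence equivalence of the vacuum representations. The one point to keep straight is that your final GNS step identifies the cyclic subrepresentations generated by $\Omega$, which is how ``$\pi_{Y,1}$'' should be read here (the Wysiwyg representation on the closed $F$-span of $\Omega$) rather than the full direct-limit Hilbert space.
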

b)In the last section we showed that $A^n$ tends weakly to the projection onto $\mathbb C \Psi$ on the $F$-linear span
of $\Psi$. Here we extend that to the whole space $\mathfrak H^1_Y$. We record also the corresponding result for
$\pi_{Y,1}$.
\begin{theorem}
(i) $\pi_{Y,\phi}(A^n)$ tends weakly to the projection onto $\mathbb C \Psi$ as $n\rightarrow \infty$.

\qquad\qquad\quad\quad\hspace{3pt} (ii) $\pi_{Y,1}(A^n)$ tends weakly to zero as $n\rightarrow \infty$.
\end{theorem}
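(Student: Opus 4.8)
The plan is to bootstrap from the Lemma of the previous section showing $\langle A^{n}g\Psi,h\Psi\rangle=\langle g\Psi,\Psi\rangle\langle\Psi,h\Psi\rangle$ for all large $n$: we re-run that caret-cancellation argument with $g\Psi$ and $h\Psi$ replaced by arbitrary direct-limit vectors. Since $\pi_{Y,\phi}$ and $\pi_{Y,1}$ are unitary we have $\|\pi(A^{n})\|=1$ for every $n$, and a uniformly bounded sequence of operators converges weakly as soon as its matrix coefficients converge on a dense subspace; so it suffices to compute $\langle\pi(A^{n})\xi,\eta\rangle$ for $\xi,\eta$ in one. I take the linear span of the vectors $\tfrac{t_{m}}{x}$, where $t_{m}$ is the full dyadic tree with $2^{m}$ leaves and $x\in\Phi^{1}(2^{m})=\mathcal C_{2^{m}}$ (respectively $x\in\Phi^{X}(2^{m})$ in case (ii)); the $t_{m}$ being cofinal in $\mathfrak T$, these vectors span the direct limit. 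In case (i) the target value is $\langle P_{\Psi}\xi,\eta\rangle=\langle\xi,\Psi\rangle\langle\Psi,\eta\rangle$.

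Fix $\xi=\tfrac{t_{m}}{x}$, $\eta=\tfrac{t_{m}}{y}$ and $n$ large compared with $m$. Exactly as in the proof of the Proposition, after stabilising $\pi(A^{n})$, $\xi$ and $\eta$ to a common tree the quantity $\langle\pi(A^{n})\xi,\eta\rangle$ becomes $\tfrac1d$ times the value in $\mathcal C$ of a closed planar diagram: the disc holding $x$ at the bottom, the disc holding $y$ (reflected) at the top, and between them the pair-of-trees picture of $A^{n}$ together with the carets produced by the stabilisation. For $n$ large the staircase of $A^{n}$ is long enough to absorb all the caret structure of $t_{m}$, so the caret-cancellation move may be applied repeatedly along the left spine — exactly as in the cited Lemma — until none remains, the entries $x$ and $y$ never being touched. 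Undoing the last round of cancellations (these are of the $Y^{*}Y=1$ type, which \emph{does} hold in $\mathcal C$) re-exposes a residual staircase $A^{k}$ in the middle, and the diagram then splits, up to the connected-sum factors of $\tfrac1d$ already bookkept in the Lemma, as the product of the closed diagram obtained by capping the $\xi$-end with a cup, the one obtained by capping the $\eta$-end with a cup, and the evaluation of the capped $A^{k}$. By the Wysiwyg identification of the Proposition the first two factors are the vacuum coefficients $\langle\xi,\Psi\rangle$ and $\langle\Psi,\eta\rangle$ (with the same normalising constants as in the Lemma), while in the $\Phi^{1}$ case the capped $A^{k}$ evaluates, by $k$ applications of $Y^{*}Y=1$ followed by the loop value $d$, simply to $d$. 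The tally is then word for word that of the Lemma — the factor $d$ cancels against the two connected-sum factors $\tfrac1d$ and the normalisation of $\Psi$ — leaving $\langle\pi_{Y,\phi}(A^{n})\xi,\eta\rangle=\langle\xi,\Psi\rangle\langle\Psi,\eta\rangle=\langle P_{\Psi}\xi,\eta\rangle$ for all large $n$. The matrix coefficients on the dense subspace are thus eventually constant, and (i) follows.

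For (ii) the computation is identical, with the single modification already pointed out in the remark after the cited Lemma: in the $\Phi^{X}$ picture an extra string threads the region occupied by the residual staircase, so each of the $k$ bubbles now encircles that string and, by the skein relations of \cite{MPS}, contributes $\tfrac{d-2}{d-1}$ instead of $1$; the capped $A^{k}$ thus evaluates to $d\bigl(\tfrac{d-2}{d-1}\bigr)^{k}$. The same cancellation then leaves $\langle\pi_{Y,1}(A^{n})\xi,\eta\rangle=\bigl(\tfrac{d-2}{d-1}\bigr)^{k}\langle\xi,\Psi\rangle\langle\Psi,\eta\rangle$ with $k\to\infty$ as $n\to\infty$, and since $\bigl|\tfrac{d-2}{d-1}\bigr|<1$ for the values of $d$ under consideration this tends to $0$, proving (ii).

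The one real obstacle is the diagrammatic bookkeeping of the second paragraph: one must verify that, for $n$ large, the two cascades of cancellations at the $\xi$-end and at the $\eta$-end run independently with no interference, and that all the connected-sum and normalisation factors recombine to give exactly $P_{\Psi}$ rather than some other scalar multiple. This is the same difficulty already resolved in the cited Lemma; the only new feature is that $x$ and $y$ are arbitrary rather than manufactured from group elements, and the extension is essentially free precisely because the cancellation algorithm involves only the tree $t_{m}$ and the staircase of $A^{n}$, leaving $x$ and $y$ inert until the final evaluation in $\mathcal C$, where capping the $\xi$-end (resp.\ $\eta$-end) with a cup reproduces $\langle\xi,\Psi\rangle$ (resp.\ $\langle\Psi,\eta\rangle$) by the Proposition.
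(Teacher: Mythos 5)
Your overall architecture matches the paper's: you test weak convergence on the dense span of the vectors $\frac{t_m}{x}$ with $t_m$ the full dyadic trees, stabilise everything to a common tree, evaluate the resulting closed diagram in $\mathcal C$, and for (ii) you invoke the extra through-string that converts each rung of the residual ladder into a factor $\frac{d-2}{d-1}$, forcing decay to zero. Part (ii) is essentially correct as you state it, since there one only needs the residual $x$- and $y$-functionals to be bounded independently of $n$, not to be identified exactly.

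The gap is in (i), and it sits exactly where you say ``the extension is essentially free.'' The Belk--Sapir cancellation cascade you propose to re-run is a reduction of \emph{group elements}: in the Lemma it is applied to the word $h^{-1}A^ng$, whose reduced pair-of-trees form is then fed into the Wysiwyg Proposition. Its rule \vpic {rule1} {0.5in} does not hold in $\mathcal C$, which is tolerable there only because the reduction happens at the group level before any evaluation. When $\xi=\frac{t_m}{x}$ with $x$ an arbitrary element of $\mathcal C_{2^m}$, there is no group element to reduce and no Proposition to apply: the quantity $\langle A^n\xi,\eta\rangle$ must be computed directly as an inner product in $\mathcal C$ after stabilisation, where only the isometry relation $Y^*Y=1$ is available. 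Consequently your identification of the two residual factors as precisely $\langle\xi,\Psi\rangle$ and $\langle\Psi,\eta\rangle$ (``capping with a cup reproduces the vacuum coefficient'') is asserted, not proved, and it is the entire content of the statement. The paper deliberately avoids this bookkeeping: it stabilises directly, observes that because $\dim\mathcal C_2=1$ the dotted regions containing $x$ and $y^*$ contribute fixed linear functionals once $n$ is large, concludes only that the weak limit \emph{exists and has rank one}, and then pins it down as $P_\Psi$ using the already-established fact that $\Psi$ is a unit vector fixed by $A$. Adopting that last step would let you discard the exact-constant computation entirely and close the gap.
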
 

\begin{proof}
(i) The proof will involve a calculation similar to that of the previous theorem but fortunately the detailed structure of the
stabilising forests need not concern us. This is because we know already that there is a fixed vector for $\pi_{Y,\phi}(A)$ so 
if we can show the weak limit of $\pi_{Y,\phi}(A^n)$ exists and has rank 1, it can only be orthogonal projection onto the
subspace spanned by the fixed point. 

So let $a_n$ and $b_n$ be the trees with $n$ leaves illustrated below for $n=7$:

$$a_7= \vpic {a7} {1in}     \qquad  b_7= \vpic {b7} {1in}  $$

Then $\displaystyle A^n= \frac{a_{n-2}}{b_{n-2}} .$

Suppose we are given $\displaystyle \xi= \frac{t_m}{x}$ and $\displaystyle \eta= \frac{t_m}{y} $ as in the last theorem in the direct limit
Hilbert space on which $\pi_{Y,\phi}(A^n)$ acts. As before our first job is to stabilise $b_n$ and $t_m$ so they
are equal. Provided $n$ is much larger than $2^m$ this will be achieved for $b_n$ by a forest of the form:
$$ \vpic {stabiliserforb} {1.5in} $$
where $f$ is the  forest  with the $m-2$ trees  $t_{m-1}, t_{m-2},\cdots$ from left to right. To stabilise the numerator   $t_m$ of $\xi$,  simply  attach a copy of $b_k$ for some large $k$ (approximately
equal to $n-2^m$)  to the rightmost leaf of $t_m$. Thus we have $$ \pi_{Y,\phi}(A^n)(\xi)=\frac{\tilde a_n} {\tilde x} $$ with
$\tilde x$ given by the following tangle (illustrated with $m=3$ and $k=7$:
$$\tilde x = \vpic {xstab} {1.5in} $$
and $\tilde a_n$ is the tree $a_n$ stabilised by the forest above thus:
$$ \vpic {astabilised} {1.5in} $$
In order to calculate the inner product $\langle \pi_{Y,\phi}(A^n)(\xi),\eta\rangle$ we need to stabilise $\tilde a_n$ and 
$t_m$ so that they are equal. This involves attaching a forest $g$ which is a reflected version of $f$ on the right hand side of $\tilde a$, 
and attaching a copy of $\tilde a_k$ for some $k$ to the left hand side of $t_m$. Applying these stabilisations to the denominators
we see that we want the inner product in $\mathcal C$ of 
$$ \vpic {xfinal} {2in} \mbox{  and  } \vpic {yfinal} {2in} $$

Observe that $p$ (the number of strings as indicated) can be made arbitrarily large by increasing $n$ whereas the sizes of $f$ and
$g$ depend only on $m$. We see that $\langle \pi_{Y,\phi}(A^n)(\xi),\eta\rangle$ is given by an element of $\mathcal C_0$ that looks like:
$$ \vpic {ipxy} {1.5in} $$
Which can be redrawn in $\mathcal C$ as 
$$ \vpic {lastpic} {1.5in} $$
where we observe that, since the dimension of $\mathcal C_2$ is one, the contributions of the top and bottom dotted circles 
are just linear functionals  of $x$ and $y^*$ which do not change as soon as $n$ is large enough. 
By the unitarity property of $Y$, we see that the weak limit of $\pi_{Y,\phi}(A^n)$ exists and has rank 1.

Very little changes for $\pi_{Y,1}(A^n)$. The only difference in the final diagram is a string joining the top of the picture to 
the bottom so that the dotted circles meet three strings instead of two. Thus what is inside them contributes a multiple of
$Y$ and the ladder in the middle introduces a term $t^p$ which tends to zero as $n$, hence $p$ tends to infinity.

\end{proof}
\iffalse
\section{Type II$_1$ and type III$_1$ subrepresentations.}

We will exhibit two subgroups $H<G<F$ for which generate (hyperfiinte) type II$_1$ and III$_1$ factors on 
the closed $H$-linear and $G$-linear spans of the vacuum vector for the reprsentations. First, the groups.

\begin{definition} Let $\g =(g_n), -\infty <n<\infty$, be a two sided sequence of elements of $f$ with $g_n=id$ for  $|n|$ large.
Define $g(\g)$ to be the PL homeomorphism of $[0,1]$ which is a scaled down version of $g_n$ on the interval 
$[2^{n-1},2^{n}]$ for $n<0$ and \\ $[1-2^{-n-1},1-2^{-n-2}]$ for $n\geq 0$.
\end{definition}

\begin{remark} The pair of trees picture of $g(\g)$ is the following:\\
n[ n]
\end{remark}

\begin{remark} Since the intervals of the previous definition partition $[0,1]$, $$\mathcal F=\{g(\g)|g \mbox{ is any sequence in } F\}$$ is isomorphic to the 
group $\oplus_{n\in \mathbb Z} F$.
\end{remark}
\begin{proposition} The element $A\in F$ satisfies $Ag(\g)A^{-1}=g(\sigma(\g))$ where $\sigma$ is the shift automorphism sending
$(g_n)$ to $(g_{n+1})$. Thus the group generated by $A$ and $\mathcal F$ is the wreath product  
\end{proposition}
\begin{proof} $A$ permutes the intervals of $[0,1]$ accordingly. It is also easy to see this using the Belk-Sapir product.
\end{proof}
\fi

\end{document}